\def\d{{\rm{d}}}
\def\n0{n_{0}}
\def\1{\mathbbm{1}}
\begin{document}

\begin{frontmatter}

\title{HOEFFDING'S INEQUALITIES FOR GEOMETRICALLY ERGODIC
MARKOV CHAINS ON GENERAL STATE SPACE\protect\thanksref{T1}}
\runtitle{Hoeffding's inequalities for Markov chains}
\thankstext{T1}{Work partially supported by Polish Ministry of Science and Higher Education  Grants No. N N201387234 and  N N201 608740}

\begin{aug}
\author{\fnms{B{\l}a\.zej} \snm{Miasojedow}%\thanksref{t3}
\ead[label=e2]{bmia@mimuw.edu.pl}}

%\thankstext{t1}{Some comment}
%\thankstext{t2}{First supporter of the project}
%\thankstext{t3}{Second supporter of the project}
\runauthor{B. Miasojedow}

\affiliation{University of Warsaw}

\address{B. Miasojedow\\Institute of Applied Mathematics and Mechanics\\ University of Warsaw\\ 
Banacha 2, 02-097 Warszawa, Poland\\
\printead{e2}}

\end{aug}

\def\c0{C_{0}}

\begin{abstract}\noindent
We consider Markov chain $X_n$ with spectral gap in $L^2_\pi$ space.  Assume that $f$ is a bounded function on $\mathcal{X}$ with real values. Then the probabilities of large deviations of sums $S_n=\sum_{k=1}^nf(X_k)$ satisfy Hoeffding's-type inequalities. These bounds depend only on the stationary mean $\pi f$, spectral gap  and the end-points of support of $f$. We generalize the results of \cite{león2004optimal} in two directions. In our paper the state space is general and we do not assume reversibility. 
\end{abstract}
\begin{keyword}[class=AMS]
\kwd[Primary ]{60J05}
\kwd{65C05}
\kwd[; secondary ]{62F15}
\end{keyword}

\begin{keyword}
\kwd{Hoeffding's inequality}
\kwd{Markov chains}
\kwd{Spectral gap}
\kwd{Geometric ergodicity}
\end{keyword}

\end{frontmatter}

\section{Introduction}
Consider Markov chain  $(X_n)_{n\geq0}$, with values in Polish space $\mathcal{X}$ with Borel $\sigma$-field $\mathcal{B(X)}$ and stationary distribution $\pi$, and a function $f: \mathcal{X}\to [0,1]$. Denote by $\mu=\pi f$ the stationary mean value of $f$. Let $S_n$ be the partial sum of $f(X_n)$, i.e. $S_n=\sum_{k=1}^n f(X_n)$. The main goal of this paper is to derive bounds of probabilities of large deviations for $S_n$. We prove theorems analogous to \cite{león2004optimal} in a more general setting: 
the state space is general and we do not assume reversibility. The following bound is a consequence of our main result:
\begin{thm}\label{ld} If chain $X_n$ is $\psi$-irreducible  and if exists such $\lambda$ that for every function $g$ with $\pi g=0$ the norm  $\left\|Pg\right\|_\pi\leq\lambda$ a following inequality is satisfied
$$\Pr_\nu\left(S_n\geq n(\mu+\varepsilon)\right)\leq\left\|\frac{d\nu}{d\pi}\right\|_\pi\exp\left\{-\frac{1-\lambda}{1+\lambda}\varepsilon^2n\right\}.$$
\end{thm}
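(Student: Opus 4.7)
My proof plan is built around the Chernoff--Cram\'er exponential moment method combined with an operator-theoretic representation of the moment generating function.

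First, apply the exponential Markov inequality: for any $s>0$,
$$\Pr_\nu(S_n\geq n(\mu+\varepsilon))\leq e^{-sn(\mu+\varepsilon)}\,\mathbb{E}_\nu[e^{sS_n}].$$
Next, rewrite the moment generating function in operator form. Introduce the ``twisted'' operator $K_s$ on $L^2(\pi)$ defined by $K_sh=P(e^{sf}h)$. A routine induction based on the Markov property gives
$$\mathbb{E}_\nu[e^{sS_n}]=\left\langle \frac{d\nu}{d\pi},\,K_s^n\mathbf{1}\right\rangle_\pi,$$
and the Cauchy--Schwarz inequality in $L^2(\pi)$ bounds the right-hand side by $\|d\nu/d\pi\|_\pi\cdot\|K_s^n\mathbf{1}\|_\pi$. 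This single application of Cauchy--Schwarz is what produces the factor $\|d\nu/d\pi\|_\pi$ appearing in the statement of the theorem.

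The core difficulty is the third step: establishing a bound of the form
$$\|K_s^n\mathbf{1}\|_\pi\leq \exp\Bigl(n\mu s + \tfrac{n s^2(1+\lambda)}{4(1-\lambda)}\Bigr).$$
I would compare $K_s$ with the twisted transition operator of the two-state Markov chain whose stationary distribution is $(1-\mu,\mu)$ and whose second eigenvalue equals $\lambda$, in the spirit of the Le\'on--Perron extremal argument. A more direct route, which bypasses reversibility entirely, is to decompose $e^{sf}h$ at each iteration into its $\pi$-mean $\pi(e^{sf}h)\mathbf{1}$ and a centered remainder, apply the hypothesis $\|Pg\|_\pi\leq\lambda\|g\|_\pi$ to the remainder, and control $\pi(e^{sf}h)$ using the convexity inequality $e^{sf}\leq 1+(e^s-1)f$ valid for $f\in[0,1]$. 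Iterating through the $n$ applications of $K_s$ and aggregating the resulting constants should yield the displayed quadratic bound.

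Finally, optimize over $s>0$. The quadratic $-s\varepsilon + s^2(1+\lambda)/[4(1-\lambda)]$ is minimized at $s^\ast=2\varepsilon(1-\lambda)/(1+\lambda)$, attaining the value $-\varepsilon^2(1-\lambda)/(1+\lambda)$, which is exactly the Hoeffding exponent claimed. The principal technical obstacle lies in the third step: absent reversibility, the standard symmetrization $M_s^{1/2}PM_s^{1/2}$ is unavailable, so one has to analyze $K_s^\ast K_s$ or leverage the operator-norm spectral gap $\|Pg\|_\pi\leq\lambda\|g\|_\pi$ directly on the quadratic form associated with $K_s$, taking care to track the constants sharply enough to recover the ratio $(1-\lambda)/(1+\lambda)$.
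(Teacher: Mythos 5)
Your outer skeleton is sound and, in essence, matches the paper's: a Chernoff bound, the identity $\mathbb{E}_\nu e^{sS_n}=\left\langle \frac{d\nu}{d\pi},K_s^n\mathbf{1}\right\rangle$ with $K_s h=P(e^{sf}h)$, a single Cauchy--Schwarz to extract $\left\|\frac{d\nu}{d\pi}\right\|_\pi$ (the paper gets the same factor by H\"older with $p=q=2$ applied after proving the $\mathbb{P}_\pi$-bound of Theorem~\ref{main}), and your optimization $s^\ast=2\varepsilon\frac{1-\lambda}{1+\lambda}$ giving the exponent $-\frac{1-\lambda}{1+\lambda}\varepsilon^2 n$ is arithmetically correct. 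The target bound in your third step is also true: it follows (even with $\frac{1+\lambda}{8(1-\lambda)}$ in place of $\frac{1+\lambda}{4(1-\lambda)}$) from the paper's chain of results.

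The genuine gap is that this third step \emph{is} the theorem, and your sketch does not prove it. The decomposition you propose gives, using $\pi P=\pi$ and the gap hypothesis, $\|K_sh\|_\pi^2=[\pi(e^{sf}h)]^2+\|P\bigl((e^{sf}h)_C\bigr)\|_\pi^2\leq[\pi(e^{sf}h)]^2+\lambda^2\|(e^{sf}h)_C\|_\pi^2$, which is exactly the starting point of the paper's Lemma~\ref{l1}; but iterating it forces you to control the variational quantity $\sup_{\|g\|_\pi=1}\left\{(1-\lambda)\left[\pi(e^{\frac{s}{2}f}g)\right]^2+\lambda\pi(e^{sf}g^2)\right\}$, and this is \emph{not} controlled by $\mu$ through the pointwise bound $e^{sf}\leq 1+(e^s-1)f$: for instance $\sup_{\|g\|_\pi=1}\pi(e^{sf}g^2)=\mathrm{ess}\sup e^{sf}$, which is independent of $\mu$. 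The dependence on $\mu$ with the Hoeffding form only emerges from an extremal comparison with the two-state chain $M_{\mu,\lambda}$, and making that comparison rigorous on a general state space is precisely the content of the paper: Lemma~\ref{l1} replaces $P$ by the reversible kernel $Q=(1-\lambda)\Pi+\lambda I$; Theorem~\ref{mp1} (the Le\'on--Perron convex-ordering argument) compares the $Q$-chain with $M_{\mu,\lambda}$; and since the eigenvalue equation \eqref{eqr} may have no solution and the operator norm need not be attained in infinite state space, the paper discretizes $f$ into $f_k$ and proves Lemma~\ref{glowny} to identify $\|\widehat{Q}_{t,k}\|_{L^2(\pi)}$ with the Perron eigenvalue $r_{t,k}$ before passing to the limit. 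Your proposal supplies no substitute for any of these steps -- the phrase ``aggregating the resulting constants should yield the displayed quadratic bound'' is exactly where the proof has to happen, and as written nothing guarantees the constant $\frac{1+\lambda}{4(1-\lambda)}$ (or any constant of the form $c\,\frac{1+\lambda}{1-\lambda}$) comes out of the naive iteration.
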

Inequalities of this form can play an important role in Monte Carlo Markov chains (MCMC) algorithms because they bound  the error of estimation. Results of this type have been obtained for uniformly ergodic chains in \cite{glynn2002hoeffding} and improved in \cite{kontoyiannis2005relative}. In the case when the state space is discrete related results are obtained by  \cite{Lezaud1998} and  \cite{león2004optimal}. 

We  use a similar technique as in  \cite{león2004optimal}. The first step is to construct an associated chain $X'_n$ and reduce the problem to properties of operator corresponding to $E_\pi \exp(tS'n)$. 
In the second step the problem is reduced to the two-state space case. The main parts of our proof are: Lemma~\ref{l1} which allow us to omit reversibility assumption and Lemma~\ref{glowny} which solves issues introduced by allowing state space to be infinite.

The paper is organized as follows. In Section~\ref{sec:def} we introduce our notations. The main results are established in Section~\ref{sec:main}.  Proofs of key lemmas are postponed in Section~\ref{sec:lemmas}.
\section{Definitions and notation}\label{sec:def}
Throughout this paper $(X_n)_{n\geq 0}$ represents  $\psi$-irreducible Markov chain on a Polish space $\mathcal{X}$ with $\sigma$-field $\mathcal{B(X)}$, transition kernel $P(x,A)$ and stationary distribution $\pi$.  
Recall that Markov chain is $\psi$-irreducible if there exists non-trivial measure $\psi$ such that for all $A\in\mathcal{B(X)}$ with $\psi(A)>0$ and for all $x\in\mathcal{X}$ we have $\mathbb{P}_x(\tau_A<\infty)>0$, where $\tau_A$ is first return to set $A$, i.e. $\tau_A=\inf\{n\geq 1: X_n\in A\}$. 
%The chain $X_n$ is geometrically ergodic if for every initial point $x\in\mathcal{X}$ following condition is satisfied: 
%\begin{equation}\label{geoerg}
%\left\|P^n(x,\cdot)-\pi(\cdot)\right\|_{tv}\leq\rho^nM_x,\end{equation}
%where $\rho<1,$ constant $M_x$ is finite for all $x$ and $\left\|\cdot\right\|_{tv}$ is total variation distance, $\left\|\nu_1-\nu_2\right\|_{tv}=\sup_{\{A\in\mathcal{B(X)}\}}\left|\nu_1(A)-\nu_2(A)\right|$.

The linear operator $P$  associated with transition kernel $P(x,A)$ acts to the right on functions and to the left on measures, so that 
$$ Pg(x)=\int_{\mathcal{X}}g(y)P(x,\d y),\quad\quad \nu P(A)=\int_{\mathcal{X}}P(x,A)\nu(\d x).$$
For every measure $\nu$ on $\mathcal{B(X)}$ and every function $g:\mathcal{X}\to\mathbb{R}$ we denote:
$$ \nu g=\int_{\mathcal{X}}g(x)\nu(\d x),\quad\quad g\otimes\nu(x,A)=g(x)\nu(A).$$

\noindent  Consider $P$ as an operator on Hilbert space $L^2_\pi$, the space of functions such that $\pi(f^2)<\infty$,  with inner product $\left\langle f,g\right\rangle= \int_{\mathcal{X}}f(x)g(x)\pi(dx)$.  The norm in $L^2_\pi$ is denoted by $\left\|\cdot\right\|_\pi$. 
As usual, the norm of operator $T$ on $L^2_\pi$ is defined by 
$$\left\|T\right\|_{L^2(\pi)}=\sup_{\{f\ :\ \left\|f\right\|_\pi=1\}}\left\|Tf\right\|_\pi.$$

%Linear operator $P$ is expressed as $1\otimes\pi+P_0$
%We say that transition operator $P$ has the spectral gap $1-\lambda>0$ if $\left\|P_0\right\|_{L^2(\pi)}=\left\|P-1\otimes\pi\right\|_{L^2(\pi)}=\lambda<1$. 
%Then the adjoint operator $P^{*}$, associated with kernel $P^{*}(x,dy)=P(y,dx)\frac{\pi(dy)}{\pi(dx)}$,
%has also the same gap.
%For any function $g\in L^2(\pi)$ we denote centered function as $g_C=g-\pi(g)$. These functions have following properties:
%\begin{enumerate}
%\item for all $g\in L^2(\pi)$ 
%$$P_0g=P_0g_C$$
%\item for all $g,h\in L^2(\pi)$ 
%$$\left\langle h,P_0g\right\rangle=\left\langle h_C,P_0g_C\right\rangle$$
%\item for all $g,h\in L^2(\pi)$ 
%$$\left\langle h_C,g\right\rangle=\left\langle h,g_C\right\rangle=\left\langle h_C,g_C\right\rangle$$
%\end{enumerate}
\goodbreak

\section{Main result}\label{sec:main}
We assume that transition operator admits spectral gap $1-\lambda$ in $L^2(\pi)$, precisely:
\begin{ass}\label{ass:spectral_gap}
\begin{equation*}\left\|P-\Pi\right\|_{L^2(\pi)}=\lambda<1,\end{equation*}
where $\Pi\stackrel{def}{=}1\otimes\pi$.
\end{ass}
\begin{rem}
Note that this assumption for reversible chains is equivalent to existing spectral gap and hence is equivalent to geometric ergodicity \cite{kontoyiannis2009geometric,roberts1997geometric}. In non reversible case exist geometrically ergodic chains, such that Assumption~\ref{ass:spectral_gap} doesn't hold even for any of the $n$-step transition operators \cite{kontoyiannis2009geometric}.
\end{rem}  
Let $f$ be a function from $\mathcal{X}$ to $[0,1]$ and let $S_n$ be a sum $S_n=\sum_{k=1}^nf(X_k)$.

\begin{thm}\label{main} Let $X_n$ be  $\psi$-irreducible Markov chain with stationary distribution $\pi$. Moreover let $1-\lambda$ be a spectral gap of transition kernel $P$. Then the following bounds  hold for all $\varepsilon>0$ such that $\mu+\varepsilon<1$ :
\begin{eqnarray*}\mathbb{P_\pi}\left(S_n\geq n(\mu+\varepsilon)\right)&\leq&
\left[\frac{\mu+\bar{\mu}\lambda}{1-2\frac{\bar{\mu}-\varepsilon}{ 1+\sqrt{\Delta}}}\right]^{n(\mu+\varepsilon)}\left[\frac{\bar{\mu}+\mu\lambda}{1-2\frac{\mu+\varepsilon}{ 1+\sqrt{\Delta}}}\right]^{n(\bar{\mu}-\varepsilon)}\\
&\leq&\exp\left\{-2\frac{1-\lambda}{1+\lambda}\varepsilon^2n\right\},
\end{eqnarray*}
where
$$\Delta=1+\frac{4\lambda(\mu+\varepsilon)(\bar{\mu}-\varepsilon)}{\mu\bar{\mu}(1-\lambda)^2},\quad\bar{\mu}=1-\mu.$$
\end{thm}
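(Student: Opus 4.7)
The plan is to adapt the Chernoff / León--Perron strategy, with two new ingredients---the two lemmas flagged in the introduction---handling the lack of reversibility and the generality of the state space. I start with the exponential Markov inequality: for every $t\ge 0$,
$$\mathbb{P}_\pi(S_n\ge n(\mu+\varepsilon))\le e^{-t n(\mu+\varepsilon)}\,\mathbb{E}_\pi\,e^{t S_n}.$$
Next, encode the MGF operator-theoretically. With $M_{e^{tf/2}}$ the multiplication operator and $K_t:=M_{e^{tf/2}}\,P\,M_{e^{tf/2}}$, a telescoping of conditional expectations together with stationarity of $\pi$ yields
$$\mathbb{E}_\pi e^{t S_n}\;=\;\bigl\langle e^{tf/2},\,K_t^{\,n-1}\,e^{tf/2}\bigr\rangle_\pi,$$
so the problem reduces to controlling this quadratic form in $L^2(\pi)$, or, sharper, the spectral radius $\rho(K_t)$.

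The first main difficulty is that $P$ is \emph{not} assumed reversible, so $K_t$ need not be self-adjoint, and the hypothesis $\|P-\Pi\|_{L^2(\pi)}=\lambda$ does not immediately give a clean bound on $\|K_t^{\,n-1}\|$. This is exactly the problem Lemma~\ref{l1} is advertised to solve: morally, it should let us replace $K_t$ by a self-adjoint surrogate $\widetilde K_t$ built from a symmetrized version of $P$ (along the lines of $\tfrac12(P+P^*)$ or $\sqrt{P^*P}$) that still reflects the spectral-gap parameter $\lambda$. Once self-adjointness is in place, $\|\widetilde K_t^{\,n-1}\|=\rho(\widetilde K_t)^{\,n-1}$ and León--Perron-style two-state reasoning becomes applicable.

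The second difficulty is that $\mathcal{X}$ is an arbitrary Polish space, whereas León--Perron's reduction to a two-state chain was combinatorial and specific to finite~$\mathcal{X}$. Lemma~\ref{glowny} is designed to carry out this reduction in the abstract: among all chains on $(\mathcal{X},\pi)$ with spectral gap $1-\lambda$ and all $f:\mathcal{X}\to[0,1]$ with $\pi f=\mu$, the worst case of $\rho(\widetilde K_t)$ is attained by the (unique) two-state chain on $\{0,1\}$ with $f(0)=0$, $f(1)=1$, stationary distribution $(\bar\mu,\mu)$ and spectral gap exactly $1-\lambda$. On that two-state chain $\widetilde K_t$ is an explicit $2\times 2$ matrix whose top eigenvalue solves a quadratic; its discriminant is, after the appropriate change of variable in~$t$, precisely the $\Delta$ of the statement, and the closed-form root produces the $1+\sqrt{\Delta}$ denominators.

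With both lemmas in hand the rest is bookkeeping: insert the resulting two-state bound into the Chernoff inequality and optimize in~$t$. The optimizer is explicit and direct algebra yields the first displayed inequality (a product of two powers, mimicking the Bernoulli MGF). The second inequality---the clean Gaussian-type bound $\exp\{-2\varepsilon^2(1-\lambda)/(1+\lambda)\,n\}$---then follows from the first by a Taylor expansion of its logarithm around $\varepsilon=0$ together with convexity, exactly as in the classical derivation of Hoeffding's inequality from a Bernoulli moment generating function. The real work, and the main obstacle, clearly lies in Lemmas~\ref{l1} and~\ref{glowny}; everything around them is ``Chernoff plus algebra''.
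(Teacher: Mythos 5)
Your skeleton (Chernoff bound, operator form of the moment generating function, comparison with a two--state chain with parameters $\mu,\lambda$, optimization in $t$) is the same as the paper's, and the final step is indeed ``Chernoff plus algebra'' (the paper quotes Proposition 2 of Le\'on--Perron for the optimization and the Gaussian-type bound). The gap is that you lean on the two key lemmas for statements they do not make, and the content you attribute to them is exactly the part that is never proved in your sketch. Lemma~\ref{l1} is not a symmetrization of $P$: it asserts $\mathbb{E}_\pi e^{tS_n}\le\|\widehat{Q}_t\|_{L^2(\pi)}^n$, where $Q=(1-\lambda)\Pi+\lambda I$ is the explicit Le\'on--Perron extreme kernel, and it is proved by a Cauchy--Schwarz computation directly from Assumption~\ref{ass:spectral_gap}; replacing $P$ by $\tfrac12(P+P^*)$ or $\sqrt{P^*P}$ is a different move, and you give no argument that the tilted norm of such a surrogate is controlled by a two--state quantity. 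More seriously, Lemma~\ref{glowny} does not say that the two--state chain is the worst case among all chains with spectral gap $1-\lambda$; that comparison is Theorem~\ref{mp1} (convex domination by the chain $M_{\mu,\lambda}$, imported from Le\'on--Perron) applied to the chain driven by $Q$. The actual role of Lemma~\ref{glowny} is the general-state-space Perron step: showing that $\|\widehat{Q}_{t,k}\|_{L^2(\pi)}$ equals the eigenvalue $r_{t,k}$ and equals the exponential growth rate of $\mathbb{E}_\pi\exp(t\sum_i f_k(X'_i))$, which is what lets one convert the convex-domination inequality for moment generating functions back into a bound on the operator norm. Your extremal claim, as stated, is stronger than anything in the paper and would itself require proof.

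You also omit the discretization that the paper needs precisely because the state space is general: for an arbitrary $f:\mathcal{X}\to[0,1]$ the eigenvalue equation~(\ref{eqr}) may have no solution, so the paper replaces $f$ by the finitely-valued $f_k$, uses monotonicity~(\ref{eqq}) to get $\|\widehat{Q}_t\|\le\|\widehat{Q}_{t,k}\|$, runs the whole argument for $f_k$ (where an eigenfunction $g_k$ exists), and only then lets $k\to\infty$ using $\mu_k\to\mu$ and continuity of $\theta_t(\cdot)$. Without this step the chain of inequalities $\mathbb{E}_\pi e^{tS_n}\le\|\widehat{Q}_t\|^n\le\theta_t^n$ cannot be established by the route you describe, since the identification of the norm with a Perron eigenvalue may simply fail for $f$ itself. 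So the proposal is a correct reconstruction of the architecture, but the two places you wave at the lemmas are exactly where the non-reversibility and the general state space are handled, and as written those steps do not go through.
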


\noindent To prove Theorem~\ref{main} we need to consider a new Markov chain $(X'_n)_{n\geq1}$ defined by the transition kernel $Q$ such that for all $A\in\mathcal{B(X)}$,
$$Q(x,A)=(1-\lambda)\pi(A)+\lambda\1(x\in A),$$
where $\lambda$ appear in Assumption~\ref{ass:spectral_gap}.

For any bounded linear operator $T$ on $L^2_\pi$ and any $t\in\mathbb{R}$ define operator $\widehat{T_t}(g)=e^{\frac{t}{2}f}T(e^{\frac{t}{2}f}g)$.
%\begin{uwaga} Operator $Q$ is positive, i.e. operator $T$ is positive if for all functions $h>0$ we have $Th>0$, and self-adjoint. So operators $\widehat{P_t}$ and $\widehat{Q_t}$ are also positive. Furthermore operator $Q$ and $\widehat{Q_t}$ are self-adjoint therefore
%\begin{equation}\label{norm}
%\left\|\widehat{P_t}\right\|_\pi=\sup_{\left\{g,h\ :\ \left\|g\right\|_\pi=\left\|h\right\|_\pi=1\right\}}\left|\left\langle h,\widehat{P_t}g\right\rangle\right|=\sup_{\left\{g,h\ :\ \left\|g\right\|_\pi=\left\|h\right\|_\pi=1\right\}}\left\langle h,\widehat{P_t}g\right\rangle
%\end{equation}
%and
%\begin{equation}\label{norm2}
%\left\|\widehat{Q_t}\right\|_\pi=\sup_{\left\{g\ :\ \left\|g\right\|_\pi=1\right\}}\left| \left\langle g,\widehat{Q_t}g\right\rangle\right|=\sup_{\left\{g\ :\ \left\|g\right\|_\pi=1\right\}}\left\langle  g,\widehat{Q_t}g\right\rangle
%\end{equation}
%\end{uwaga}
The next Lemma allows us to consider also non-reversible chains. In reversible case we can easily, using similar technique as in \cite{león2004optimal}, replace assumption  \ref{ass:spectral_gap} by weaker one
\begin{equation}\label{eq:vb}
\sup\{z\;:\;z\in\sigma(P-\Pi)\}=\rho<1\;,
\end{equation}
where $\sigma(\cdot)$ denotes a spectrum of operator. In this case we can set $\lambda=\max(0,\rho)$. Condition \eqref{eq:vb} for reversible chains is equivalent to variance bounding property proposed by \cite{robrosvb}. 

\begin{lem}\label{l1} If $S_n$ is defined as above and $P$ satisfies \ref{ass:spectral_gap}, then

 $$\Ex_\pi\exp(tS_n)\leq \left\|\widehat{Q}_t\right\|_{L^2(\pi)}^{n}$$

\end{lem}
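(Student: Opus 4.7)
The plan is to rewrite $\Ex_\pi\exp(tS_n)$ as a sesquilinear form in $L^{2}_\pi$ involving a power of the twisted operator $\widehat{P}_t$, and then reduce the desired bound to the operator-norm comparison $\|\widehat{P}_t\|_{L^{2}(\pi)}\le\|\widehat{Q}_t\|_{L^{2}(\pi)}$.

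First I would open up the expectation using stationarity. Since $X_1\sim\pi$,
\begin{equation*}
\Ex_\pi\exp(tS_n)=\int\pi(\dx_1)\,e^{tf(x_1)}\prod_{k=2}^{n}P(x_{k-1},\dx_k)\,e^{tf(x_k)}.
\end{equation*}
Writing $\phi:=e^{tf/2}$ and splitting each factor $e^{tf(x_k)}=\phi(x_k)^{2}$ into two halves to be paired with the adjacent transition kernels, the integrand collapses, bracket by bracket, into $n-1$ successive applications of the integral kernel $\phi(x)P(x,\dy)\phi(y)$ of $\widehat{P}_t$ to the test function $\phi$. This yields the identity $\Ex_\pi\exp(tS_n)=\langle\phi,\widehat{P}_t^{\,n-1}\phi\rangle_{\pi}$. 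Cauchy--Schwarz in $L^{2}_\pi$ together with submultiplicativity of the operator norm then gives $\Ex_\pi\exp(tS_n)\le\|\phi\|_{\pi}^{2}\,\|\widehat{P}_t\|_{L^{2}(\pi)}^{\,n-1}$.

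It remains to upgrade this to $\|\widehat{Q}_t\|_{L^{2}(\pi)}^{\,n}$, which splits into two inequalities: (i) $\|\phi\|_{\pi}^{2}=\pi(e^{tf})\le\|\widehat{Q}_t\|_{L^{2}(\pi)}$ and (ii) $\|\widehat{P}_t\|_{L^{2}(\pi)}\le\|\widehat{Q}_t\|_{L^{2}(\pi)}$. Claim (i) I would obtain by evaluating the Rayleigh quotient of the self-adjoint positive operator $\widehat{Q}_t$ at $\phi$: a direct computation gives $\langle\phi,\widehat{Q}_t\phi\rangle_{\pi}=(1-\lambda)\pi(\phi^{2})^{2}+\lambda\pi(\phi^{4})$, and the Jensen bound $\pi(\phi^{4})\ge\pi(\phi^{2})^{2}$ then delivers $\|\widehat{Q}_t\|_{L^{2}(\pi)}\ge\pi(\phi^{2})$.

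The hard part will be claim (ii). Because $P$ need not be reversible, $\widehat{P}_t$ is not self-adjoint and its norm cannot be read off from eigenvalues. The natural approach is to decompose $\widehat{P}_t=D\Pi D+D(P-\Pi)D$ and $\widehat{Q}_t=D\Pi D+\lambda D(I-\Pi)D$ (where $D$ is multiplication by $\phi$), and then to compare the positive self-adjoint operators $\widehat{P}_t^{\,*}\widehat{P}_t$ and $\widehat{Q}_t^{\,2}$ via the identity $\|A\|^{2}=\|A^{*}A\|$. The only available input is Assumption~\ref{ass:spectral_gap}, i.e.\ $\|P-\Pi\|_{L^{2}(\pi)}\le\lambda$; the operator $Q-\Pi=\lambda(I-\Pi)$ saturates this bound as a scalar multiple of an orthogonal projection onto mean-zero functions, which should make $\widehat{Q}_t$ extremal in $L^{2}(\pi)$-norm among twists coming from chains with the same spectral gap. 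Turning this extremality statement into a rigorous inequality in the non-reversible setting is exactly the content of the lemma.
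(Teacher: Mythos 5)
Your reduction is sound and its first two ingredients match the paper exactly: the identity $\Ex_\pi e^{tS_n}=\left\langle e^{\frac{t}{2}f},\widehat{P}_t^{\,n-1}e^{\frac{t}{2}f}\right\rangle$ followed by Cauchy--Schwarz, and your claim (i), $\pi(e^{tf})\le\|\widehat{Q}_t\|_{L^2(\pi)}$, via the Rayleigh quotient at $e^{\frac{t}{2}f}$, are both correct. The genuine gap is claim (ii), $\|\widehat{P}_t\|_{L^2(\pi)}\le\|\widehat{Q}_t\|_{L^2(\pi)}$, which is the heart of the lemma (it is precisely what lets one drop reversibility). You name it as the hard part, propose comparing $\widehat{P}_t^{*}\widehat{P}_t$ with $\widehat{Q}_t^{2}$ via $\|A\|^2=\|A^{*}A\|$, and then fall back on an extremality heuristic (``should make $\widehat{Q}_t$ extremal \dots is exactly the content of the lemma''). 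That is not a proof: expanding $\widehat{P}_t^{*}\widehat{P}_t=DP^{*}D^2PD$ produces cross terms between $\Pi$ and $P_0:=P-\Pi$ that Assumption~\ref{ass:spectral_gap} alone does not control, and no operator-order comparison with $\widehat{Q}_t^{2}$ is available in general; only the norm comparison is true, and you have not derived it.

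The missing step has a direct argument, which is what the paper does: bound the bilinear form rather than the operator. For unit $g,h$, writing $\phi=e^{\frac{t}{2}f}$ and $u_C:=u-\pi(u)$, one has $\left\langle h,\widehat{P}_t g\right\rangle=\pi(\phi h)\pi(\phi g)+\left\langle(\phi h)_C,P_0(\phi g)_C\right\rangle$ (using $P_01=0$ and $\pi P_0=0$), so Assumption~\ref{ass:spectral_gap} gives $\left\langle h,\widehat{P}_t g\right\rangle\le\pi(\phi h)\pi(\phi g)+\lambda\|(\phi h)_C\|_\pi\|(\phi g)_C\|_\pi$. The elementary inequality $ab+\lambda cd\le\sqrt{a^2+\lambda c^2}\,\sqrt{b^2+\lambda d^2}$ then decouples $g$ from $h$, and the resulting quantity $\left[\pi(\phi g)\right]^2+\lambda\|(\phi g)_C\|_\pi^2$ is exactly $\left\langle g,\widehat{Q}_t g\right\rangle\le\|\widehat{Q}_t\|_{L^2(\pi)}$ for $\|g\|_\pi=1$. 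Taking suprema yields (ii) using nothing but $\|P_0\|_{L^2(\pi)}\le\lambda$, with no self-adjointness of $\widehat{P}_t$ required. Until you supply an argument of this kind (or make the $A^{*}A$ comparison rigorous), your proposal leaves the central inequality of the lemma unproved.
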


As in \cite{león2004optimal} we define a two-state chain. Let $(Y_n)_{n\geq1}$ be a Markov chain on space $\{0,1\}$ with transition matrix with second largest eigenvalue $\lambda\geq 0$ and stationary distribution $\boldsymbol{\mu}:=[1-\mu,\mu]':=[\bar{\mu},\mu]'$. Then transition matrix can be written as 
$$M_{\mu,\lambda}=\lambda\mathbb{I}+(1-\lambda)\boldsymbol{1}\boldsymbol{\mu}'.$$
Let $D^2_t=\diag(1,e^t)$ and denote by $\theta_t$ the Perron-Frobenius eigenvalue of $D^2_t M_{\mu,\lambda}$.
By the same arguments as in Theorem 2 from \cite{león2004optimal} we obtain:
\begin{thm}\label{mp1}Let $X'_n$ have transition kernel $Q$ and let $\mu=\Ex_\pi f(X'_k)$. Then for every convex function $G:\mathbb{R}\to\mathbb{R}$ we have
$$  \Ex_\pi\left[G\left(f(X'_1)+...+f(X'_n)\right)\right]\leq \Ex_\mu\left[G\left(Y_1+...+Y_n)\right)\right],$$
where $Y_n$ is a Markov chain with transition matrix $M_{\mu,\lambda}$.
\end{thm}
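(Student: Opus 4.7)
The strategy is to exploit the common lazy-regeneration structure of $Q$ and $M_{\mu,\lambda}$: each is of the form \emph{stay with probability $\lambda$, otherwise resample from the stationary measure}. Coupling the two chains through a shared sequence of holding/resampling indicators reduces the inequality to a one-dimensional convex-order comparison between $f(Z)$, with $Z\sim\pi$, and a $\mathrm{Bernoulli}(\mu)$ variable.

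First I would construct the coupling. Let $J_{2},\dots,J_{n}$ be independent $\mathrm{Bernoulli}(1-\lambda)$ random variables and put $J_{1}=1$. On a common probability space, if $J_{k}=1$ draw $X'_{k}\sim\pi$ and $Y_{k}\sim\boldsymbol{\mu}$ independently of each other and of the past, while if $J_{k}=0$ set $X'_{k}=X'_{k-1}$ and $Y_{k}=Y_{k-1}$. A direct check shows that the resulting marginal laws coincide with those of chains with kernels $Q$ and $M_{\mu,\lambda}$, both started from stationarity. Let $1=\tau_{1}<\dots<\tau_{K}\le n$ be the indices with $J_{\tau_{j}}=1$, let $L_{j}$ denote the lengths of the resulting constancy blocks, and set $Z_{j}=X'_{\tau_{j}}$, $W_{j}=Y_{\tau_{j}}$. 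Since both chains are constant on each block,
\begin{equation*}
\sum_{k=1}^{n}f(X'_{k})=\sum_{j=1}^{K}L_{j}f(Z_{j}),\qquad
\sum_{k=1}^{n}Y_{k}=\sum_{j=1}^{K}L_{j}W_{j},
\end{equation*}
where $(Z_{j})$ are i.i.d.\ $\pi$-distributed, $(W_{j})$ are i.i.d.\ $\mathrm{Bernoulli}(\mu)$, and both families are jointly independent of $(K,L_{1},\dots,L_{K})$.

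Next I would establish the single-block convex inequality. Because $f(Z_{j})\in[0,1]$ and $\pi f=\mu$, for every convex $\psi\colon\mathbb{R}\to\mathbb{R}$ the elementary bound $\psi(t)\le(1-t)\psi(0)+t\psi(1)$ on $[0,1]$ yields
\begin{equation*}
\mathbb{E}\,\psi(f(Z_{j}))\le(1-\mu)\psi(0)+\mu\,\psi(1)=\mathbb{E}\,\psi(W_{j}).
\end{equation*}
Applied with $\psi(x)=G(c+Lx)$, which is convex in $x$ for every $c\in\mathbb{R}$ and every $L\ge 1$, this gives $\mathbb{E}\,G(c+Lf(Z_{j}))\le\mathbb{E}\,G(c+LW_{j})$.

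To finish I would condition on $(K,L_{1},\dots,L_{K})$ and replace $f(Z_{j})$ by $W_{j}$ one index at a time; at the $j$-th step the other summands contribute a fixed constant $c$ (measurable with respect to the variables already swapped and the still-unswapped ones) independent of $f(Z_{j})$ and $W_{j}$, so the inequality above applies with that $c$. After $K$ such replacements and integration over the block structure, the stated bound follows. The main obstacle is the coupling step: one must verify that the shared indicators $(J_{k})$ produce chains with the correct marginal laws and genuinely common block lengths $(L_{j})$. This is a short computation from the explicit forms of $Q$ and $M_{\mu,\lambda}$, and once it is in place the rest is routine convex-order bookkeeping, exactly as in Theorem~2 of \cite{le�n2004optimal}.
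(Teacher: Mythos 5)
Your proof is correct and is essentially the argument the paper invokes by citing Theorem~2 of León and Perron: represent both lazy kernels $Q$ and $M_{\mu,\lambda}$ through shared hold/refresh indicators, reduce to block sums with i.i.d.\ refresh draws, and use that a $[0,1]$-valued variable with mean $\mu$ is dominated in convex order by a $\mathrm{Bernoulli}(\mu)$ variable, swapped in one block at a time. No gaps; this matches the intended proof.
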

We define function $g(x):=\frac{e^{\frac{t}{2}f(x)}}{r_t-\lambda e^{tf(x)}}$. If $r_t$ is solution of equation
\begin{equation}\label{eqr}1=\int_\mathcal{X}\frac{(1-\lambda)e^{tf(x)}}{r_t-\lambda e^{tf(x)}}\pi(\d x)
\end{equation} and if $r_t>\lambda \textrm{ess}\sup_{x\in\mathcal{X}}(e^{tf(x)})$ then function $g$ is positive ($\pi\ a.s.$) and is an eigenfunction of $\widehat{Q}_t$ with eigenvalue $r_t$. Unfortunately the equation (\ref{eqr}) equation for some functions $f$ can have no solution. To avoid this problem lets approximate function $f$ by function with finite number of values. For all $k\in\mathbb{Z^+}$ define $f_k$ as
$$f_k(x)=\sum_{i=1}^k\frac{i}{k}1(x\in A_{i,k}),$$
where  $A_{i,k}:=\{x\in\mathcal{X}\ :\ \frac{i-1}{k}<f(x)\leq\frac{i}{k}\}$. We define $\mu_k$ and $\widehat{Q}_{t,k}$ by replacing $f$ instead $f_k$ in definitions of $\mu$ and $\widehat{Q}_t$ respectively. 
Operator $Q$ is positive (i.e, if $h\geq0$ then $Qh\geq0$) so operator $\widehat{Q}_t$ is positive to and
\begin{eqnarray}
\left\|\widehat{Q}_t\right\|_{L^2(\pi)}&=&\sup_{\{h\geq0\ :\ \left\|h\right\|_\pi=1\}}\left\langle h,\widehat{Q}_th\right\rangle\nonumber \\
&\leq&\sup_{\{h\geq0\ :\ \left\|h\right\|_\pi=1\}}\left\langle h,\widehat{Q}_{t,k}h\right\rangle\nonumber\\
&=&\left\|\widehat{Q}_{t,k}\right\|_{L^2(\pi)}\label{eqq}
\end{eqnarray}
By dominated convergence theorem $\lim_{k\to\infty}\mu_k=\mu.$
Let $\theta_t(x)$ be the Perron-Frobenius eigenvalue of $D^2_t M_{x,\lambda}$. Function $\theta_t(x)$ is continuous so $\theta_{t,k}:=\theta_t(\mu_k)$ converge to $\theta_t$ if $k$ tends to infinity. First we show that (\ref{eqr}) has solution for any function $f_k$.
We consider function $$F(r)=\int_\mathcal{X}\frac{(1-\lambda)e^{tf_k(x)}}{r-\lambda e^{tf_k(x)}}\pi(\d x),$$ for $r>\lambda \textrm{ess}\sup_{x\in\mathcal{X}}(e^{tf_k(x)})$ this function is continuous. If $r$ tends to infinity then $F(r)$ tends to zero. Moreover exist $1\leq j\leq k$ such that  $a:=e^{t\frac{j}{k}}=\textrm{ess}\sup_{x\in\mathcal{X}}(e^{tf_k(x)})$ and $\pi$-measure of set $C_a:=\{x\in\mathcal{X}\ :\ a=e^{tf_k(x)}\}$ is equal to $d>0$. So if $r$ tends to $\lambda a$ then $F(r)$ tends to infinity. Hence exists $r_{t,k}$ such that $F(r_{t,k})=1$ and $g_k(x):=\frac{e^{\frac{t}{2}f(x)}}{r_{t,k}-\lambda e^{tf(x)}}$ is an eigenfunction of $\widehat{Q}_{t,k}$ with eigenvalue $r_{t,k}$

The next Lemma shows that norm of operator $\widehat{Q}_{t,k}$ is equal to largest eigenvalue, which is trivial in finite state space, but in general state space is the main difficulties in proof of Theorem~\ref{main}. 
\begin{lem}\label{glowny} With the above notation for all $k\in\mathbb{Z^+}$ we have the following. 
 \begin{itemize}
  \item[(i)] $$r_{t,k}=\left\|\widehat{Q}_{t,k}\right\|_{L^2(\pi)}$$ 
   \item[(ii)]$$\lim_{n\to\infty}\frac{1}{n}\log \Ex_\pi\exp\left(t\sum_{i=1}^nf_k(X'_i)\right)=\log(r_{t,k})$$
 \end{itemize}
\end{lem}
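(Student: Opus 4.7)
The plan is to exploit the self-adjointness of $\widehat{Q}_{t,k}$. Because $Q = (1-\lambda)\Pi + \lambda I$ is self-adjoint on $L^{2}_\pi$ and $\widehat{Q}_{t,k}$ is obtained by conjugating $Q$ by the real multiplication $M_\psi$ with $\psi := e^{\frac{t}{2}f_k}$, the operator $\widehat{Q}_{t,k}$ is itself self-adjoint. A direct expansion moreover gives
$$\langle h, \widehat{Q}_{t,k}h\rangle_\pi = (1-\lambda)\bigl[\pi(\psi h)\bigr]^{2} + \lambda\,\pi(\psi^{2}h^{2}) \ge 0,$$
so $\widehat{Q}_{t,k}$ is in addition positive semi-definite. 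These two facts defuse the infinite-dimensional obstruction: they yield $\|\widehat{Q}_{t,k}\|_{L^{2}(\pi)} = \sup_{\|h\|_\pi=1}\langle h,\widehat{Q}_{t,k}h\rangle_\pi$ and $\|\widehat{Q}_{t,k}^{\,n}\| = \|\widehat{Q}_{t,k}\|^{n}$, neither of which is automatic for a merely positivity-preserving operator on infinite-dimensional $L^{2}_\pi$.

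For part (i), I would apply the Cauchy--Schwarz inequality
$$\pi(\psi h) \le \sqrt{\pi\!\left(\tfrac{\psi^{2}}{r_{t,k}-\lambda\psi^{2}}\right)\,\pi\!\left((r_{t,k}-\lambda\psi^{2})h^{2}\right)},$$
which is legitimate since $r_{t,k}>\lambda\,\mathrm{ess\,sup}\,\psi^{2}$. Equation (\ref{eqr}) identifies the first factor under the square root as $\frac{1}{1-\lambda}$; squaring and rearranging produces
$$(1-\lambda)\bigl[\pi(\psi h)\bigr]^{2} + \lambda\,\pi(\psi^{2}h^{2}) \le r_{t,k}\|h\|_\pi^{2},$$
i.e.\ $\|\widehat{Q}_{t,k}\|_{L^{2}(\pi)}\le r_{t,k}$. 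Equality is attained by the eigenfunction $g_k$ already constructed in the text, giving (i).

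For part (ii) I would first recast the path expectation operator-theoretically. Using the intertwining $Q(e^{tf_k}\,\cdot\,) = M_{\psi^{-1}}\widehat{Q}_{t,k}M_\psi$ and iterating via the tower property yields
$$\Ex_\pi\exp\!\Bigl(t\sum_{i=1}^{n}f_k(X'_i)\Bigr) = \langle\psi,\widehat{Q}_{t,k}^{\,n-1}\psi\rangle_\pi.$$
The upper bound $\le \|\psi\|_\pi^{2}\,r_{t,k}^{\,n-1}$ is immediate from (i). For the matching lower bound I would orthogonally decompose $\psi = \alpha g_k + w$ with $w\perp g_k$; equation (\ref{eqr}) yields
$$\langle\psi,g_k\rangle_\pi = \int\frac{e^{tf_k}}{r_{t,k}-\lambda e^{tf_k}}\,\pi(\d x) = \frac{1}{1-\lambda}\neq 0,$$
so $\alpha\neq 0$. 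Since $\widehat{Q}_{t,k}g_k=r_{t,k}g_k$ and $\widehat{Q}_{t,k}^{\,n-1}$ is positive semi-definite, $\langle\psi,\widehat{Q}_{t,k}^{\,n-1}\psi\rangle_\pi \ge \alpha^{2}\|g_k\|_\pi^{2}\,r_{t,k}^{\,n-1}$; dividing by $n$ and letting $n\to\infty$ matches the upper bound at $\log r_{t,k}$.

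The hard point is really (i): on infinite-dimensional $L^{2}_\pi$ the norm of a positivity-preserving operator can strictly exceed both its spectral radius and any individual eigenvalue, the phenomenon absent from the discrete Perron--Frobenius setting of \cite{le�n2004optimal} and flagged as the main obstacle in the excerpt. The rescue is the self-adjoint, positive semi-definite structure of $\widehat{Q}_{t,k}$ afforded by the simple form of $Q$; once this is recognised, the Cauchy--Schwarz calculation organised around (\ref{eqr}) is the only delicate step, and (ii) reduces to routine Hilbert-space spectral bookkeeping.
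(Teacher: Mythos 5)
Your proof is correct, and it differs from the paper's argument in the step that the paper itself flags as the main difficulty, part (i). The paper proceeds by contradiction: assuming $\|\widehat{Q}_{t,k}\|_{L^2(\pi)}>r_{t,k}$, it takes a near-maximizing sequence $h_n\perp g_k$, splits $h_n=h_n^+-h_n^-$, and uses the positivity-preserving property of $\widehat{Q}_{t,k}$ together with the two-sided bounds $C_1\le g_k\le C_2$ to force $\pi(h_n^+)\pi(h_n^-)\to 0$ and hence $\langle h_n,\widehat{Q}_{t,k}h_n\rangle\le r_{t,k}+O(1/n)$. You instead bound the quadratic form directly for every $h$: writing $\pi(\psi h)=\pi\bigl(\tfrac{\psi}{\sqrt{r_{t,k}-\lambda\psi^2}}\cdot\sqrt{r_{t,k}-\lambda\psi^2}\,h\bigr)$ and invoking equation (\ref{eqr}) to evaluate the weight $\pi\bigl(\tfrac{\psi^2}{r_{t,k}-\lambda\psi^2}\bigr)=\tfrac1{1-\lambda}$, you get $(1-\lambda)[\pi(\psi h)]^2+\lambda\pi(\psi^2h^2)\le r_{t,k}\|h\|_\pi^2$ in one stroke, and self-adjointness plus the eigenfunction $g_k$ finish (i) without any contradiction, without the constants $C_1,C_2,C_3$, and without using positivity-preservation at all. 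For (ii) both arguments start from the same identity $\Ex_\pi\exp(t\sum_{i=1}^nf_k(X'_i))=\langle\psi,\widehat{Q}_{t,k}^{\,n-1}\psi\rangle$ and the same upper bound $\|\psi\|_\pi^2 r_{t,k}^{n-1}$; for the lower bound the paper compares $g_k\le C_2\psi$ pointwise and uses positivity of the operator, whereas you decompose $\psi=\alpha g_k+w$ with $w\perp g_k$ (noting $\langle\psi,g_k\rangle=\tfrac1{1-\lambda}\neq0$ by (\ref{eqr}), and that $g_k^{\perp}$ is $\widehat{Q}_{t,k}$-invariant by self-adjointness) and use positive semi-definiteness of $\widehat{Q}_{t,k}^{\,n-1}$; both give $\langle\psi,\widehat{Q}_{t,k}^{\,n-1}\psi\rangle\ge c\,r_{t,k}^{n-1}$ with a constant independent of $n$. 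What your route buys is brevity and a clear isolation of the only structure actually needed (self-adjointness and the rank-one-plus-$\lambda I$ form of $Q$, organised around (\ref{eqr})); what the paper's route emphasizes is the Perron--Frobenius-style mechanism, where the strictly positive bounded eigenfunction and positivity-preservation do the work. One cosmetic remark: in the Cauchy--Schwarz step you should bound $|\pi(\psi h)|$ rather than $\pi(\psi h)$, which is harmless since only its square enters, and the claim $\|\widehat{Q}_{t,k}^{\,n}\|=\|\widehat{Q}_{t,k}\|^{n}$ is never actually needed in your argument.
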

 
\begin{proof}[Proof of Theorem~\ref{main}.]
By Markov's inequality for all $t>0$ we obtain
\begin{equation}\label{equ1}\mathbb{P_\pi}\left(S_n\geq n(\mu+\varepsilon)\right)\leq e^{-tn(\mu+\varepsilon)}\Ex_\pi e^{tS_n}.
\end{equation}
By Lemma~\ref{l1} and (\ref{eqq}) we have 

$$\Ex_\pi e^{tS_n}\leq\left\|\widehat{Q}_{t}\right\|_{L^2(\pi)}^{n}\leq\left\|\widehat{Q}_{t,k}\right\|_{L^2(\pi)}^{n}.$$
Let $Y^k_1,..Y^k_n$ be a Markov chain with transition matrix $M_{\mu_k,\lambda}$.
From Theorem~\ref{mp1}  and Lemma~\ref{glowny} we obtain
\begin{eqnarray*}\log(\left\|\widehat{Q}_{t,k}\right\|_{L^2(\pi)})&=&\lim_{n\to\infty}\frac{1}{n}\log \Ex_\pi\exp\left(t\sum_{i=1}^nf_k(X'_i)\right)\\&\leq&\lim_{n\to\infty}\frac{1}{n}\log \Ex_\mu\exp\left(t\sum_{i=1}^nf(Y^k_i)\right)\\&=&\log(\theta_{t,k}),
\end{eqnarray*}
therefore
$$\Ex_\pi e^{tS_n}\leq \theta_{t,k}^n.$$
We tend with $k$ to infinity an obtain that
$$\Ex_\pi e^{tS_n}\leq \theta_{t}^n.$$
By Proposition 2 of \cite{león2004optimal} we complete the proof. 

\end{proof}
\begin{cor} With assumptions as in Theorem~\ref{main}. For all $\varepsilon>0$ such that $\mu+\varepsilon\leq 1$ and for all measures $\nu<<\pi$ we have:
$$\mathbb{P}_\nu\left(S_n\geq n(\mu+\varepsilon)\right)\leq\left\|\frac{d\nu}{d\pi}\right\|_p\exp\left\{-2\frac{1-\lambda}{q(1+\lambda)}\varepsilon^2n\right\},$$
where
$$\left\|\frac{d\nu}{d\pi}\right\|_p=\left\{\begin{array}{ll}\left(\int_{\mathcal{X}}\left|\frac{d\nu}{d\pi}\right|^pd\pi\right)^{\frac{1}{p}}& \textrm{if}\ p<\infty\\&\\
ess\sup_{x\in\mathcal{X}}\left|\frac{d\nu}{d\pi}(x)\right|&\textrm{if}\ p=\infty
\end{array}\right.$$
and
$$\frac{1}{p}+\frac{1}{q}=1.$$
\end{cor}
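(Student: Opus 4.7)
The plan is to reduce the bound for a general initial distribution $\nu \ll \pi$ to the stationary bound of Theorem~\ref{main} by a change of measure at time zero followed by H\"older's inequality. Writing $h := d\nu/d\pi$, for any event depending on $(X_0,X_1,\dots,X_n)$ the initial-distribution formula gives
$$\mathbb{P}_\nu(S_n \geq n(\mu+\varepsilon)) = \int_\mathcal{X} \mathbb{P}_x(S_n\geq n(\mu+\varepsilon))\,\nu(\d x) = \mathbb{E}_\pi\!\left[h(X_0)\,\mathbbm{1}_{\{S_n\geq n(\mu+\varepsilon)\}}\right],$$
since under $\mathbb{P}_\pi$ we have $X_0\sim\pi$ and $(X_k)_{k\geq 1}$ evolves with the same transition kernel $P$.

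The second step is to apply H\"older's inequality with conjugate exponents $p,q$ satisfying $1/p+1/q=1$:
$$\mathbb{E}_\pi\!\left[h(X_0)\,\mathbbm{1}_{\{S_n\geq n(\mu+\varepsilon)\}}\right] \leq \left\|h\right\|_p \cdot \bigl(\mathbb{P}_\pi(S_n\geq n(\mu+\varepsilon))\bigr)^{1/q},$$
where $\|h\|_p$ is taken with respect to $\pi$ and matches the definition in the statement (including the case $p=\infty$ by the usual essential supremum convention).

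Finally, plugging in the sub-Gaussian bound from Theorem~\ref{main}, namely
$$\mathbb{P}_\pi(S_n\geq n(\mu+\varepsilon)) \leq \exp\!\left\{-2\,\tfrac{1-\lambda}{1+\lambda}\varepsilon^2 n\right\},$$
and raising to the power $1/q$ yields the desired inequality with the factor $1/q$ appearing in the exponent. There is essentially no obstacle here: the only point to be careful about is that the change of measure is applied solely at the initial time, so the transition kernel (and hence all the spectral data $\lambda$) is untouched, and the constant in the exponent is simply scaled by $1/q$. The cases $p=1,q=\infty$ and $p=\infty,q=1$ follow by the standard limiting interpretation of H\"older's inequality.
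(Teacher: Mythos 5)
Your proposal is correct and follows essentially the same route as the paper: a change of measure at the initial time (density $d\nu/d\pi$), H\"older's inequality with exponents $p,q$, and then the stationary bound of Theorem~\ref{main} raised to the power $1/q$. The only cosmetic difference is that you apply H\"older directly to the indicator (using $\mathbbm{1}^q=\mathbbm{1}$), whereas the paper conditions on the first state and uses $\mathbb{P}_x(\cdot)^q\le\mathbb{P}_x(\cdot)$ before integrating --- the same argument in a slightly different arrangement.
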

\begin{proof} From H$\ddot{o}$lder's inequality we have

\begin{eqnarray*}\mathbb{P}_\nu\left(S_n\geq n(\mu+\varepsilon)\right)&=&\int_{\mathcal{X}}\mathbb{P}_x\left(S_{n-1}\geq n(\mu+\varepsilon)-f(x)\right)\frac{d\nu}{d\pi}(x)\pi(\d x)\\
&\leq&\left\|\frac{d\nu}{d\pi}\right\|_p\left(\int_{\mathcal{X}}\mathbb{P}_x\left(S_{n-1}\geq n(\mu+\varepsilon)-f(x)\right)^q\pi(\d x)\right)^{\frac{1}{q}}\\
&\leq&\left\|\frac{d\nu}{d\pi}\right\|_p\left(\int_{\mathcal{X}}\mathbb{P}_x\left(S_{n-1}\geq n(\mu+\varepsilon)-f(x)\right)\pi(\d x)\right)^{\frac{1}{q}}\\
&=&\left\|\frac{d\nu}{d\pi}\right\|_p\mathbb{P}_\pi\left(S_n\geq n(\mu+\varepsilon)\right)^{\frac{1}{q}}.
\end{eqnarray*}
\end{proof}

\section{Proofs of key lemmas}\label{sec:lemmas}
\begin{proof}[Proof of Lemma~\ref{l1}]  We know from Cauchy-Schwarz inequality that 
\begin{eqnarray}
 \Ex_\pi\exp(tS_n)&=&\left\langle 1,(e^{tf}P)^n 1\right\rangle = \left\langle e^{\frac{t}{2}f},\widehat{P_t}^{n-1}e^{\frac{t}{2}f}\right\rangle \nonumber
 \\ \label{momgep}
 &\leq& \left\|e^{\frac{t}{2}f}\right\|^2_\pi\left\|\widehat{P_t}\right\|_{L^2(\pi)}^{n-1}=\pi\left(e^{tf}\right)\left\|\widehat{P_t}\right\|_{L^2(\pi)}^{n-1}.
 \end{eqnarray}
For any function $g$ denote its centered version by $g_C:=g-\pi(g)$ and by $P_0:=P-\Pi$.
Since $P$ satisfies \ref{ass:spectral_gap} then by Cauchy-Schwartz inequality we obtain
\begin{eqnarray}\left\|\widehat{P_t}\right\|_{L^2(\pi)} &=& \sup_{\left\{g,h\ :\ \left\|g\right\|_\pi  = \left\|h\right\|_\pi=1\right\}}\left\langle h,\widehat{P_t} g\right\rangle \nonumber \\
&=&\sup_{\left\{g,h\ :\ \left\|g\right\|_\pi=\left\|h\right\|_\pi=1\right\}}\left\langle e^{\frac{t}{2}f}h,Pe^{\frac{t}{2}f} g\right\rangle \nonumber \\
&=&\sup_{ \left\{g,h\ :\ \left\|g\right\|_\pi=\left\|h\right\|_\pi=1\right\}}\left\{\pi(e^{\frac{t}{2}f}h)\pi(e^{\frac{t}{2}f}g)\nonumber \right.\\&&\left.\qquad\qquad\qquad\qquad\qquad\qquad\qquad+\left\langle (e^{\frac{t}{2}f}h)_C,P_0(e^{\frac{t}{2}f} g)_C\right\rangle\right\}\nonumber \\
&\leq&\sup_{ \left\{g,h\ :\ \left\|g\right\|_\pi=\left\|h\right\|_\pi=1\right\}}\left\{\pi(e^{\frac{t}{2}f}h)\pi(e^{\frac{t}{2}f}g)\nonumber \right.\\&&\left.\qquad\qquad\qquad\qquad\qquad\qquad\quad+\lambda\left\|(e^{\frac{t}{2}f}h)_C\right\|_\pi\left\|(e^{\frac{t}{2}f} g)_C\right\|_\pi\right\} \nonumber \\
&\leq&\sup_{ \left\{g,h\ :\ \left\|g\right\|_\pi=\left\|h\right\|_\pi=1\right\}}
\left\{\sqrt{\left[\pi(e^{\frac{t}{2}f}g)\right]^2+\lambda\left\|(e^{\frac{t}{2}f} g)_C\right\|^2_\pi}\nonumber \right.\\&&\left.\qquad\qquad\qquad\qquad\qquad\quad\times\sqrt{\left[\pi(e^{\frac{t}{2}f}h)\right]^2+\lambda\left\|(e^{\frac{t}{2}f} h)_C\right\|^2_\pi}\right\} \nonumber \\
&\leq& \sup_{\left\{g\ :\ \left\|g\right\|_\pi=1\right\}}\left\{\left[\pi(e^{\frac{t}{2}f}g)\right]^2+\lambda\left\|(e^{\frac{t}{2}f} g)_C\right\|^2_\pi\right\}\nonumber \\
&=&\sup_{\left\{g\ :\ \left\|g\right\|_\pi=1\right\}}\left\langle 
e^{\frac{t}{2}f}g,\pi(e^{\frac{t}{2}f}g)+\lambda(e^{\frac{t}{2}f}g)_C\right\rangle\nonumber \\&=&\left\|\widehat{Q_t}\right\|_{L^2(\pi)}.\nonumber
\end{eqnarray}Furthermore we have
\begin{multline*}\pi\left(e^{tf}\right)\left\|\widehat{Q}_t\right\|_{L^2(\pi)}\geq\left\langle e^{\frac{t}{2}f},\widehat{Q}_te^{\frac{t}{2}f}\right\rangle
=\left\langle e^{tf},Qe^{tf}\right\rangle=\pi\left(e^{tf}\right)^2\\+\lambda\pi\left((e^{\frac{t}{2}f})^2_C\right)\geq\pi\left(e^{tf}\right)^2
\end{multline*}
and that completes the proof. 
\end{proof}
\begin{proof}[Proof of Lemma~\ref{glowny}]
Ad. (i) Suppose that $\left\|\widehat{Q}_{t,k}\right\|_{L^2(\pi)}>r_{t,k}$ since $Q$ is self-adjoint operator and $g_k$ is an eigenfunction of this operator  then exists a sequence of function $h_n$ such that following conditions are hold:
\begin{enumerate}
\item for all $n=1,2,...$  $\left\|h_n\right\|_\pi=1$ and
\begin{equation}\label{defh1}h_n\bot g_k\end{equation}
\item sequence of inner products $\left\langle h_n,\widehat{Q}_{t,k}h_n\right\rangle$ is non-decreasing and for all $n=1,2,...$,
\begin{equation}\label{defh2}\left\|\widehat{Q}_{t,k}\right\|_{L^2(\pi)}\leq\left\langle h_n,\widehat{Q}_{t,k}h_n\right\rangle+\frac{1}{n}\end{equation}
\end{enumerate}
Function $g_k$ is positive and 
\begin{equation}\label{eq:1}
C_1=\frac{1}{r_{t,k}}\leq g_k\leq\lambda\textrm{ess}\sup_{x\in\mathcal{X}}g_k(x)=\lambda C_2\qquad \pi\ a.s.
\end{equation}
From (\ref{defh1}) it follows that functions $h^+_n=\max(h_n,0)$ and $h^-_n=\max(-h_n,0)$  satisfy
$$\pi h^+_n\leq\frac{\lambda C_2}{C_1}\pi h^-_n=C_3\pi h^-_n$$
and
\begin{equation}\label{eq:2}
\pi h^-_n\leq\frac{\lambda C_2}{C_1}\pi h^+_n=C_3\pi h^+_n. 
\end{equation}
For all functions $h_n$ 
\begin{eqnarray}\nonumber \left\langle h_n,\widehat{Q}_{t,k}h_n\right\rangle&=&(1-\lambda)\left[\pi(e^{\frac{t}{2}f_k}h_n)\right]^2+\lambda\pi(e^{tf_k}h_n^2)\\ \nonumber \label{eq:3}&=&(1-\lambda)\left[\left(\pi(e^{\frac{t}{2}f_k}h_n^+)\right)^2-2\pi(e^{\frac{t}{2}f_k}h_n^+)\pi(e^{\frac{t}{2}f_k}h_n^-)\right.\\&&\left.\qquad\qquad\qquad\qquad+\left(\pi(e^{\frac{t}{2}f_k}h_n^-)\right)^2\right]+\lambda\pi(e^{tf_k}h_n^2).\end{eqnarray} Operator $\widehat{Q}_{t,k}$ is positive so for all functions $h$ we have  $\left\langle \left|h\right|,\widehat{Q}_{t,k}\left|h\right|\right\rangle\geq\left\langle h, \widehat{Q}_{t,k}h\right\rangle.$ Functions $h^+_n$, $h^-_n$ are positive so from (\ref{defh2})
\begin{eqnarray}\label{eq:4}\nonumber
\frac{1}{n}&\geq
&\left\langle \left|h_n\right|,\widehat{Q}_{t,k}\left|h_n\right|\right\rangle-\left\langle h_n, \widehat{Q}_{t,k}h_n\right\rangle\\\nonumber&=&
4(1-\lambda)\pi(e^{\frac{t}{2}f_k}h_n^+)\pi(e^{\frac{t}{2}f_k}h_n^-)\\&\geq&4(1-\lambda)\pi(h_n^+)\pi(h_n^-)
\end{eqnarray}
Finally from (\ref{eq:2}), (\ref{eq:3}), (\ref{eq:4}) and the definition of $r_{t,k}$ we obtain
 \begin{eqnarray*}\left\langle h_n,\widehat{Q}_{t,k}h_n\right\rangle&\leq&
(1-\lambda)\left[\left(\pi(e^{\frac{t}{2}f_k}h_n^+)\right)^2+\left(\pi(e^{\frac{t}{2}f_k}h_n^-)\right)^2\right]+\lambda\pi(e^{tf_k}h_n^2)\\&\leq
&e^{t}(1-\lambda)\left[(\pi(h_n^+))^2+(\pi (h_n^-))^2\right]+r_{t,k}\\&\leq& \frac{e^{t}C_3}{2n}+r_{t,k}
\end{eqnarray*}
Tending to infinity with n we obtain $\left\|\widehat{Q}_{t,k}\right\|_{L^2(\pi)}\leq r_{t,k}.$ 

Ad. (ii) Since $g_k$ is an eigenfunction with eigenvalue $r_{t,k}$  we have for all $n$
\begin{eqnarray*}\left\|\widehat{Q}_{t,k}\right\|_{L^2(\pi)}&=&r_{t,k}\\&=&\left(\frac{\left\langle g_k,\widehat{Q}_{t,k}^{n-1}g_k\right\rangle }{\left\|g_k\right\|^2_\pi } \right)^{\frac{1}{n-1}}\\&\leq& 
\left(\frac{C_2^2}{\left\|g_k\right\|^2_\pi}\right)^{\frac{1}{n}}\left\langle e^{\frac{t}{2}f_k},\widehat{Q}_{t,k}^{n-1}e^{\frac{t}{2}f_k}\right\rangle  ^{\frac{1}{n-1}}\\
&\leq&
\left(\frac{C_2^2}{\left\|g_k\right\|^2_\pi}\right)^{\frac{1}{n-1}}\left\|\widehat{Q}_{t,k}\right\|_{L^2(\pi)}\\&=&C_4^\frac{1}{n-1}\left\|\widehat{Q}_{t,k}\right\|_{L^2(\pi)}\end{eqnarray*}
but $\Ex_\pi\exp\left(t\sum_{i=1}^nf_k(X'_i)\right)=\left\langle e^{\frac{t}{2}f_k},\widehat{Q}_{t,k}^{n-1}e^{\frac{t}{2}f_k}\right\rangle$. 
\end{proof}
%\begin{proof}[Proof of Lemma~\ref{lem:malyzbior}]
%Without loss of generality we can assume  $A\cap C\neq\emptyset$. Let $x\in C$, then:
%\begin{equation}\label{eq:rhodoeb1}
%PP^*(x,A)=\int P^*(z,A)P(x,\d z)\geq\beta\int P^*(z,A\cap C)\nu(\d z)
%\end{equation} 
%By integrating \eqref{eq:lem_ass} we obtain that $\nu$ with respect to $\pi$, let $\frac{d\nu}{d\pi}$ its density. Define set $D(\eps):=\{x\in\stany\ :\ \frac{d\nu}{d\pi}\geq\eps\}$. Since $\int_{\stany}\frac{d\nu}{d\pi}\pi(dx)=1$ and $\frac{d\nu}{d\pi}\geq 0,$, so for all $0\leq\eps\leq 1$ we have $\pi(D(\eps))>0$. From \eqref{eq:rhodoeb1} we obtain:
%\begin{equation}\label{eq:rhodoeb2}
%PP^*(x,A)\geq\beta\int_{D(\eps)}\int_{A\cap C} \eps P^*(z,\d y)\pi(\d z)=\beta\eps\int_{D(\eps)}\int_{A\cap C} \pi(\d y)P(y,\d z).
%\end{equation}
%Hence
%\begin{equation}\label{eq:rhodoeb3}
%PP^*(x,A)\geq\beta^2\eps\nu(D(\eps))\pi(A\cap C).
%\end{equation}
%To finish proof we needs to bound $\nu(D(\eps))$ 
%$$\nu(D(\eps)^C)=\int_{D(\eps)^C}\frac{d\nu}{d\pi}(x)\pi(dx)\leq\eps\int_{\stany}\pi(dx)=\eps,$$
%so $\nu(D(\eps))\geq 1-\eps$. Optimize \eqref{eq:rhodoeb3} with respect to $\eps$ yields
%\begin{equation}\label{eq:rhodoeb4}
%PP^*(x,A)\geq\frac{1}{4}\beta^2\pi(A\cap C).
%\end{equation}
%\end{proof}
\section*{Acknowledgements}
Author thanks Witold Bednorz, Krzysztof {\L}atu\-szy{\'n}ski and Wojciech Niemiro for helpful comments.
\bibliographystyle{alpha} %plain

\bibliography{references}
\end{document}